\documentclass[11pt]{article}

\title{A deterministic proof of Loewner energy reversibility \\via local reversals}
\author{Jinwoo Sung\thanks{University of Chicago. \protect\url{jsung@math.uchicago.edu}}}
\date{}

\usepackage[margin=1.5in]{geometry}
\usepackage{amsthm,amsmath,amsfonts,amssymb}
\usepackage{graphicx}
\usepackage{enumerate,enumitem}
\usepackage{cite,url}
\usepackage[dvipsnames]{xcolor}

\usepackage{hyperref}
\hypersetup{
    colorlinks=true, 
    linkcolor=black, 
    urlcolor=black, 
    citecolor=black,
    linktoc=all 
}

\usepackage[font=normal,labelfont=bf,labelsep=quad]{caption}

\addtolength{\textwidth}{2cm}
\addtolength{\hoffset}{-1cm}
\addtolength{\textheight}{2.1cm}
\addtolength{\voffset}{-.5in}

\setlength{\parindent}{0em}
\setlength{\parskip}{0.3em}
\linespread{1.08}

\allowdisplaybreaks

\setlist[enumerate]{topsep = 1ex, leftmargin=1cm, itemsep= -2pt}
\setlist[itemize]{topsep = 1ex, leftmargin=1cm, itemsep= -2pt}

\let\OLDthebibliography\thebibliography
\renewcommand\thebibliography[1]{
  \OLDthebibliography{#1}
  \setlength{\parskip}{1pt}
  \setlength{\itemsep}{2pt}
}

\newtheorem{thm}{Theorem}[section]

\newtheorem{lem}[thm]{Lemma}
\newtheorem{prop}[thm]{Proposition}

\newtheorem{thmx}{Theorem}

\theoremstyle{definition} 
\newtheorem{df}[thm]{Definition}

\newtheorem{remark}[thm]{Remark}

\numberwithin{equation}{section}


\newcommand{\mc}[1]{\mathcal{#1}}
\newcommand{\mb}[1]{\mathbb{#1}}


\renewcommand{\Im}{\mathrm{Im}\,}

\newcommand{\R}{\mathbb{R}}
\newcommand{\C}{\mathbb{C}}
\newcommand{\Chat}{\hat{\mathbb{C}}}
\newcommand{\uhp}{\mathbb{H}}
\newcommand{\ud}{\mathbb{D}}
\newcommand{\ep}{\varepsilon}

\newcommand{\hcap}{\mathrm{hcap}}
\newcommand{\SLE}{\mathrm{SLE}}
\newcommand{\rev}{\mathrm{R}}

\newcommand{\boundary}{\partial}

\begin{document}

\maketitle

\begin{abstract}
    We give a new proof of the orientation reversibility of chordal Loewner energy by reversing the orientation of a chord in partial increments. This fact was first proved by Yilin Wang \cite{W1} using the reversibility of chordal Schramm--Loewner evolution (SLE) along with the interpretation of Loewner energy as the large deviation rate function of chordal SLE$_\kappa$ as $\kappa \to 0$.  Our method is similar in spirit to Dapeng Zhan's proof \cite{Zhan} of chordal SLE$_\kappa$ reversibility for $\kappa \in (0,4]$, though it is purely deterministic. As a key step in our proof, we establish that a minimimal energy chord among those passing through a fixed finite set of points is a piecewise hyperbolic geodesic.
\end{abstract}

\section{Introduction}

The Loewner energy of a chord is defined as the Dirichlet energy of the driving function of the corresponding Loewner chain. It was introduced in independent works by Friz and Shekar \cite{FrizShekhar15} and Yilin Wang \cite{W1}, the latter of whom considered it in the context of large deviations for a family of random curves called the chordal Schramm--Loewner evolution\footnote{The $\kappa \to 0$ limit of chordal SLE$_\kappa$ was also considered in \cite[Section~9.3]{Dub_comm}.} ($\SLE_\kappa$) as $\kappa \to 0$. Loewner energy has since been identified in terms of various geometric and probabilistic quantities, hinting at a deeper connection between these fields. We refer the reader to the survey articles \cite{sle_ld_survey,W_AMS} for various perspectives on Loewner energy.

Let $\gamma$ be a simple curve from 0 to $\infty$ in the upper half-plane $\uhp = \{z\in \C: \Im z>0\}$. We parameterize $\gamma$ by $\R_+ = (0,\infty)$ so that, if $g_t:\uhp\setminus \gamma((0,t]) \to \uhp$ is the unique conformal map with the normalization $g_t(z) - z \to 0$ as $z\to \infty$, then the expansion $g_t(z) = z + 2a_t z^{-1} + O(z^{-2})$ as $z\to\infty$ satisfies $a_t= t$ for every $t>0$. By Carath\'eodory's theorem, we can extend $g_t$ to the prime ends of $\R \cup \gamma((0,t])$. Let $\lambda_t := g_t(\gamma(t))$, which we call the \textit{driving function} of $\gamma$. The \textit{Loewner energy} of $\gamma$ is defined as
\begin{equation}\label{eq:uhp-def}
    I_{\uhp;0,\infty}(\gamma) := \frac{1}{2}\int_0^\infty \left|\frac{d\lambda_t}{dt}\right|^2 dt
\end{equation}
if $t\mapsto \lambda_t$ is absolutely continuous and the energy is set to be infinite otherwise.

Loewner energy is defined to be a conformally invariant quantity, which is natural given the conformal invariance of chordal SLE$_\kappa$.
That is, if $D\subsetneq \C$ is a simply connected domain and $\gamma$ is a crosscut (henceforth called a \textit{chord}) from a prime end $a$ to another prime end $b$, we define the \textit{Loewner energy} of $\gamma$ as
\begin{equation}\label{eq:energy-conformal-invariant}
    I_{D,a,b}(\gamma) := I_{\uhp,0,\infty}(\varphi(\gamma))
\end{equation}
where $I_{\uhp;0,\infty}$ is defined as in \eqref{eq:uhp-def} and $\varphi:D\to\uhp$ is a conformal transformation with $\varphi(a) = 0$ and $\varphi(b) = \infty$. It is straightforward to check that this definition does not depend on the choice of such map $\varphi$.

A fundamental property of Loewner energy, called \textit{reversibility}, is that the Loewner energy of a chord does not depend on the choice of its orientation. It is not obvious a priori that the two quantities are related due to the directionality in the definition of Loewner energy.
If $\gamma$ is a chord in $\uhp$ going from 0 to $\infty$, let $\gamma^\rev$ denote the same chord in the reverse orientation, going from $\infty$ to $0$ in $\uhp$. If we denote $\iota: z\mapsto -1/z$, we see from \eqref{eq:energy-conformal-invariant} that $I_{\uhp,\infty,0}(\gamma^\rev) = I_{\uhp,0,\infty}(\iota(\gamma^\rev)) = \frac{1}{2}\int_0^\infty |\dot \lambda_t^\rev|^2\,dt$ where $\lambda_t^\rev$ is the driving function corresponding to the curve $\iota(\gamma^\rev)$. The map $(\lambda_t)_{t\in [0,\infty)} \mapsto (\lambda_t^\rev)_{t\in [0,\infty)}$ induced by reversing of the chord is rather intricate and there does not exist an explicit formula except for extremely simple examples. Nevertheless, Wang showed in \cite{W1} that the Loewner energy of $\gamma^\rev$ is equal to that of $\gamma$. The purpose of this article is to give another proof of this fact.

\begin{thm}[Reversibility]
    \label{thm:main}
    Let $D$ be a simply connected domain and let $a$ and $b$ be any two distinct prime ends of $D$. For a chord $\gamma$ in $D$ from $a$ to $b$, let $\gamma^\rev$ be the same chord with its orientation reversed to trace from $b$ to $a$. Then, $I_{D;a,b}(\gamma) = I_{D;b,a}(\gamma^\rev)$.
\end{thm}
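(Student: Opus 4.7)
My approach would proceed as follows. By conformal invariance~\eqref{eq:energy-conformal-invariant}, it suffices to prove $I_{\uhp;0,\infty}(\gamma)=I_{\uhp;\infty,0}(\gamma^\rev)$ for a chord $\gamma$ from $0$ to $\infty$ in $\uhp$. The infinite-energy case can be handled separately via lower semicontinuity and approximation by finite-energy chords, so I assume $I(\gamma)<\infty$, in which case $\gamma$ is a continuous simple curve and $\lambda$ is absolutely continuous. The overall strategy is to decompose $\gamma$ along a fine partition into many short sub-arcs, reverse the orientation on each sub-arc, and control the total change in Loewner energy by a summable local error.

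Concretely, for a partition $0=t_0<t_1<\cdots<t_n\le\infty$ of the capacity parameterization, set $p_i:=\gamma(t_i)$ and $\eta_i:=\gamma|_{[t_{i-1},t_i]}$. Each $\eta_i$ is a chord in the simply connected slit domain $D_i:=\uhp\setminus(\gamma[0,t_{i-1}]\cup\gamma[t_i,\infty))$ from $p_{i-1}$ to $p_i$. The first step would be to establish a chain-rule identity
\begin{equation*}
I_{\uhp;0,\infty}(\gamma)=\sum_{i=1}^n I_{D_i;\,p_{i-1},p_i}(\eta_i),
\end{equation*}
which should follow from the trivial additivity $\tfrac12\int_0^\infty|\dot\lambda_t|^2\,dt=\sum_i\tfrac12\int_{t_{i-1}}^{t_i}|\dot\lambda_t|^2\,dt$ combined with a conformal-invariance identification of each piece of the integral with a slit-domain chord energy. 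The same partition gives the analogous decomposition $I(\gamma^\rev)=\sum_i I_{D_i;\,p_i,p_{i-1}}(\eta_i^\rev)$, so it suffices to show that the $n$ pairs of forward/reverse sub-chord energies agree up to a vanishing total error.

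The technical heart of the proof would be a quantitative \emph{local reversibility estimate}: for a chord $\eta$ in a simply connected domain $D$ with small Loewner energy $E:=I_{D;a,b}(\eta)$,
\begin{equation*}
\bigl|I_{D;a,b}(\eta)-I_{D;b,a}(\eta^\rev)\bigr|\le C\,E^{1+\delta}
\end{equation*}
for some universal $C,\delta>0$. Heuristically, a chord of small energy is a small perturbation of the hyperbolic geodesic between its endpoints, for which reversibility is manifest by symmetry, and the leading-order deviation from the geodesic should contribute equally to both orientations. Granted such a bound, the chain rule yields
\begin{equation*}
|I(\gamma)-I(\gamma^\rev)|\le C\sum_{i=1}^n E_i^{1+\delta}\le C(\max_i E_i)^\delta\,I(\gamma)\longrightarrow 0
\end{equation*}
as the partition is refined (since $|\dot\lambda_t|^2$ is integrable, $\max_i E_i\to 0$), finishing the proof.

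The main obstacle will be the local reversibility estimate. The naive bound $|I(\eta)-I(\eta^\rev)|\le 2E$ is not enough: I need a genuine leading-order cancellation between the driving functions associated to the two orientations of a short chord, which is a nontrivial analytic statement about how the Loewner flow records orientation in the small-capacity limit. Extracting the cancellation will likely require a careful Taylor expansion of the driving function about the constant driver together with sharp conformal distortion estimates, all made uniform in the geometry of $(D,a,b)$ so that the constant $C$ does not blow up under passage to arbitrary slit domains. This uniformity is where I expect the main difficulty to lie.
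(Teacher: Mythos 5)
Your plan has two load-bearing ingredients, and both are genuine gaps. First, the ``chain rule'' $I_{\uhp;0,\infty}(\gamma)=\sum_i I_{D_i;\,p_{i-1},p_i}(\eta_i)$ does \emph{not} follow from the trivial additivity of $\tfrac12\int|\dot\lambda_t|^2\,dt$. What additivity gives you is that the $i$-th increment of the integral equals the \emph{partial} Loewner energy of $\eta_i$ in the domain slit by the past only, $\uhp\setminus\gamma[0,t_{i-1}]$, with target $\infty$. Your decomposition instead uses the chordal energy of $\eta_i$ in the domain slit by both the past \emph{and the future}, $D_i=\uhp\setminus(\gamma[0,t_{i-1}]\cup\gamma[t_i,\infty))$, with target $p_i$. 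Identifying these two quantities is exactly the boundary-touching extension of the commutation relation \eqref{eq:communtation}, which is stated (Theorem~\ref{lem:commutation}) only for curves that are disjoint, endpoints included; the passage to curves joined at their endpoints requires showing that partial energies converge to chordal energies as the slit closes up, and in the literature the identity you want is obtained \emph{from} reversibility (or from the $\log|f'|$ description), so invoking it here is circular, or at best a major unproven step rather than a ``conformal-invariance identification.''

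Second, the quantitative local reversibility estimate $|I_{D;a,b}(\eta)-I_{D;b,a}(\eta^\rev)|\le C E^{1+\delta}$ is precisely the content of the theorem in disguise: a posteriori it holds with $C=0$, but no direct small-energy cancellation of this type is known, and your outline offers no mechanism to produce one (the uniformity over slit domains you worry about is actually free by conformal invariance; the existence of any superlinear bound is the real problem). The paper's proof is structured specifically to avoid both of these needs: it never reverses a sub-arc in place, but replaces each short terminal piece by the hyperbolic geodesic in the reverse direction, which has \emph{zero} energy; a one-sided limiting version of \eqref{eq:communtation} (Lemma~\ref{lem:comm-ineq}) then shows the total energy can only decrease under each such surgery, and no exact decomposition or two-sided cancellation is ever required. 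The surgered chords are shown, via mapping-out function estimates and the capacity--diameter bound for finite-energy chords, to converge in the Carath\'eodory topology to $\gamma^\rev$, and lower semicontinuity of $I_{\uhp;y,x}$ in that topology (Proposition~\ref{prop:lsc}) yields $I(\gamma^\rev)\le I(\gamma)$, with equality by exchanging the roles of $\gamma$ and $\gamma^\rev$. If you want to salvage your scheme, you would either have to prove the touching-endpoint commutation identity independently or replace your exact chain rule and two-sided estimate with one-sided inequalities plus a compactness/semicontinuity argument, which is essentially the paper's route.
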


Let us briefly describe previous proofs of Loewner energy reversibility and explain how ours differs from them. The original proof in \cite{W1} relied upon the reversibility of chordal SLE$_\kappa$ for $\kappa \in (0,4]$, which was first established in \cite{Zhan} and proved subsequently with different methods in \cite{IG2,lawler_yearwoood_sle_reversiblity}. By interpreting Loewner energy $I_{\uhp;0,\infty}$ as the large deviation rate function as $\kappa \to 0$ for chordal SLE$_\kappa$ in $\uhp$ from 0 to $\infty$, Wang showed that a finite-energy chord must have the same Loewner energy in the reverse orientation.

Wang gave a deterministic proof of Theorem~\ref{thm:main} in \cite{W2} through an identification between $I_{\uhp,0,\infty}(\gamma)$ and the Dirichlet energy of $\log |f'|$, where $f$ is the uniformizing map on each component of $\uhp \setminus \gamma$. The reversibility of Loewner energy follows immediately since $f$ does not depend on the orientation of the chord $\gamma$. Based on this result, she revealed in the same work an unexpected link between Loewner chains, conformal welding, and the K\"ahler geometry of the Weil--Petersson Teichm\"uller space.

The key difference in our proof of Loewner energy reversibility from the previous deterministic one is that, rather than ``lifting" $\gamma$ to the space of chords without orientation, we consider a ``continuous" reversal of its orientation. In particular, our proof is based on the \textit{commutation relation} for Loewner energy (Theorem~\ref{thm:commutation}), which says that the order in which we calculate the total energy of two non-intersecting Jordan arcs does not change this quantity. Despite being closely related with the commutation relation for chordal SLE \cite{Dub_comm}, this identity was obtained deterministically in \cite{W1} from the definition \eqref{eq:uhp-def} of Loewner energy through a careful calculation identifying how the driving function of one curve changes when the other curve is grown first. 

\subsection{Proof overview}
Here is an overview of our proof of Theorem~\ref{thm:main}. Given a chord $\gamma$ in a simply connected domain $D$ from $a$ to $b$, we approximate it with chords which pass through finite subsets of its trace and satisfy energy reversibility. In particular, given a finite subset $\underline z \subset \gamma$, we choose a chord $\gamma_{\underline z}$ with minimal energy among those which pass through $\underline z$ between $a$ and $b$. Using an inequality version of the commutation relation for simple curves terminating at the same point (Lemma~\ref{lem:comm-ineq}), we identify that $\gamma_{\underline z}$ is \textit{piecewise geodesic relative to $\underline z$}. This means that $\underline z$ partitions $\gamma$ into simple curves $\gamma_1,\dots,\gamma_n$ such that each $\gamma_k$ is a hyperbolic geodesic in the domain $D \setminus (\bigcup_{j\neq k} \gamma_j)$. The commutation relation further implies that the Loewner energy of any piecewise geodesic chord is invariant under reversing its orientation (Proposition~\ref{prop:minimizer}). As $\underline z$ becomes dense in $\gamma$, the Loewner energy of $\gamma_{\underline z}$ increases to that of $\gamma$; this holds in both directions, thus proving that the Loewner energies of $\gamma$ and its orientation reversal are the same.

Our proof technique can be applied to show that when $\underline z$ contains $n\geq 2$ points, a Loewner energy minimizer within each isotopy class of chords relative to $\underline z$ is piecewise geodesic. There are infinitely many such isotopy classes, as can be seen via an identification with the elements of the braid group on $n$ strands (see Remark~\ref{rem:braid}).

\begin{thm}\label{prop:isotopy}
    Given a chord $\gamma$ in $D$ between prime ends $a,b$ passing through a finite set of points $\underline z \subset D$, let $\mc X(D;a,b,\underline z,\gamma)$ be the set of chords in $D$ between $a$ and $b$ that can be obtained through an isotopy of $\gamma$ leaving $\underline z\cup \{a,b\}$ pointwise fixed. Then, given any chord $\gamma$ and a finite subset $\underline z \subset \gamma$, there exists a chord $\gamma^{\mathrm{min}} \in \mathcal X(D;a,b;\underline{z},\gamma)$ such that 
    \begin{equation}\label{eq:isotopy-min}
        I_{D;a,b}(\gamma^{\mathrm{min}}) = \inf_{\mc X(D;a,b;\underline{z},\gamma)} I_{D;a,b}(\gamma) < \infty.
    \end{equation}
    Moreover, if $\gamma^{\mathrm{min}}$ is any chord in $\mathcal X(D;a,b;\underline{z},\gamma)$ satisfying \eqref{eq:isotopy-min}, then it is piecewise geodesic relative to $\underline z$.
\end{thm}

In the case that $\underline{z}$ contains a single point $z_1$, the piecewise geodesic property as well as the uniqueness of the energy minimizing chord passing thorough this point was verified by an explicit identification of this chord in \cite[Prop.\ 3.1]{W1} via SLE$_{0+}$ large deviations and in \cite[Thm.\ 3.3(i)]{energy-minimizer} via deterministic Loewner chain calculations. 
In fact, it was shown in \cite[Thm.\ 3.9]{piecewise-geodesic-i} that this is the unique $C^1$ chord which is piecewise geodesic relative to $z_1$. Based on an analogous result for the case of loops \cite{piecewise-geodesic-ii}, we conjecture that when $\underline z$ contains more than one point, there exists a unique Loewner energy minimizer within each isotopy class $\mc X(D;a,b;\underline z, \gamma)$ and it is the unique $C^1$ chord in this isotopy class which is piecewise geodesic relative to $\underline z$.

\begin{remark}
Our proof of Loewner energy reversibility is closely related to the proof of chordal SLE$_\kappa$ reversibility for $\kappa \in (0,4]$ in \cite{Zhan}. Zhan establishes this fact by constructing a coupling of an SLE$_\kappa$ chord $\gamma_1$ going from $x_1$ to $x_2$ in the domain $\uhp$ for fixed $x_1,x_2\in \R$ with another SLE$_\kappa$ chord $\gamma_2$ in $\uhp$ between the same endpoints in reverse direction so that both traverse the same random set of points $\underline{z}\subset \uhp$ visited by $\gamma_1$ at stopping times $t_1 < \dots< t_n$. In fact, the reverse chord $\gamma_2$ is constructed by replacing each curve $\gamma_1([t_{k-1},t_k])$, as $k$ decreases from $n+1$ to 1, with a reweighted version of chordal SLE$_\kappa$ going from $\gamma_1(t_k)$ to $\gamma_1(t_{k-1})$ in the complement of $\gamma_1([0,t_{k-1}])$ and the already constructed part of $\gamma_2$ (cf.\ proof of Proposition~\ref{prop:minimizer}).
The random set $\underline{z}$ is taken to become denser and denser on $\gamma_1$, ultimately converging to a coupling where the two SLE$_\kappa$ chords are almost surely orientation reversals of each other. 

The relation to our proof of Loewner energy reversibility is that, conditioned on the points $\underline{z}$, the law of the chord $\gamma_1$ (resp.\ $\gamma_2$) in the above coupling is that of a chordal SLE$_\kappa$ curve in $\uhp$ from $x_1$ to $x_2$ (resp.\ $x_2$ to $x_1$) conditioned to pass through $\underline{z}$. Since $I_{\uhp;x_1,x_2}$ (resp.\ $I_{\uhp;x_2,x_1}$) is the large deviation rate function for the chordal SLE$_\kappa$ curve in $\uhp$ from $x_1$ to $x_2$ (resp.\ $x_2$ to $x_1$), when we take the $\kappa \to 0$ limit for this coupling conditioned on the points $\underline z$, the SLE$_\kappa$ chord $\gamma_1$ (resp.\ $\gamma_2$) should converge to the energy minimizing chord $\gamma_1^{\mathrm{min}}$ among those going from $x_1$ to $x_2$ (resp.\ $\gamma_2^{\mathrm{min}}$ from $x_2$ to $x_1$) in $\uhp$ while passing through $\underline z$. Given the uniqueness of $\gamma_1^{\mathrm{min}}$ and $\gamma_2^{\mathrm{min}}$ (which we do not claim but expect to be true for a general sequence $\underline z$), we know from their piecewise geodesic property (Proposition~\ref{prop:minimizer}) that these are orientation reversals of each other. That is, $\gamma_1$ and $\gamma_2$ are random perturbations of the same piecewise geodesic chord passing through $\underline z$.
The proof of Loewner energy reversibility in \cite{W1} essentially proceeds by taking $\underline z$ to become dense on $\gamma_1$ and $\gamma_2$, in which case the limits of $\gamma_1^{\mathrm{min}}$ and $\gamma_2^{\mathrm{min}}$ have the same trace due to the Jordan curve theorem.
\end{remark}

Here is an overview of the rest of this article. In Section~\ref{sec:preliminaries}, we define and discuss the properties of the partial Loewner energy of a simple curve, which can be considered as that of the chord obtained by completing it with a hyperbolic geodesic. We also collect some results about topologies on the space of chords. In Section~\ref{sec:proof}, we first establish an inequality version of the commutation relation for Loewner energy and then use it to prove Theorem~\ref{thm:main}. We prove Theorem~\ref{prop:isotopy} in Section~\ref{sec:isotopy}.

\vspace{8pt}

\textbf{Acknowledgements.} I wish to thank Yilin Wang for introducing this problem during her lectures at the Cornell Probability Summer School, and extend my gratitude to the organizers of the program. I thank Osama Abuzaid, Phil\'emon Bordereau, Luis Brummet, Liam Hughes, and Eveliina Peltola for helpful discussions, and Yilin Wang and Catherine Wolfram for their comments on the earlier version this article. I am grateful to Chlo\'e Postel-Vinay for raising the connection with braid groups. This work was completed in part during the Thematic Program on Randomness and Geometry at the Fields Institute, whose hospitality is acknowledged. The author is partially supported by a fellowship from Kwanjeong Educational Foundation.

\section{Preliminaries}\label{sec:preliminaries}

Given a simply connected domain $D\subsetneq \Chat = \C \cup \{\infty\}$ with distinct prime ends $a$ and $b$, we denote by $\mc X(D;a,b)$ the space of chords in $D$ (i.e., Jordan arcs $\gamma$ in $D$ with $\boundary \gamma \subset \partial D$) from $a$ to $b$. 
We consider these chords modulo orientation-preserving reparameterizations. In particular, $\mc X(D;a,b)$ and $\mc X(D;b,a)$ differ in the orientations of chords. 
Since our notation for Loewner energy already contains information about the orientation of the chord, we denote orientation reversals implicitly in the rest of this paper. That is, if $\gamma \in \mc X(D;a,b)$, then $I_{D;b,a}(\gamma)$ should be understood as the energy of the reversed chord $\gamma^\rev \in \mc X(D;b,a)$.

If $D$ is a bounded Jordan domain, we endow $\mc X(D;a,b)$ with the topology induced by the Hausdorff distance on the (closure of) chords. For a general simply connected domain $D$ conformally equivalent to the unit disk $\mathbb D:= \{|z| \leq 1\}$, we choose any conformal map $\varphi$ taking $D$ onto $\ud$ and endow $\mc X(D;a,b)$ with the pullback of the Hausdorff topology on $\mc X(\ud;\varphi(a),\varphi(b))$. We will make heavy use of the following fact that Loewner energy is lower semicontinuous with respect to the Hausdorff topology on chords.

\begin{lem}[\!\!{\cite[Lem.\ 2.7]{peltola_wang}}]\label{lem:lsc}
    The map $I_{D;a,b} : \mc X(D;a,b) \to [0,\infty]$ is lower semicontinuous, and its sublevel set $\{\gamma \in \mc X(D;a,b): I_{D;a,b}(\gamma) \leq c\}$ is compact for any $c \in [0,\infty)$. 
    
    In fact, if $(\gamma^n)_{n\in \mb N}$ is a sequence of chords in $\mc X(\ud;-1,1)$ with $\sup_{n\in \mb N} I_{\ud;-1,1}(\gamma^n) \leq c < \infty$, then there exists a corresponding sequence of homeomorphisms $\varphi_n:\overline{\ud} \to \overline{\ud}$, with $\varphi_n|_{\partial \ud}$ equal to the identity and $\gamma^n = \varphi_n([-1,1])$ for each $n\in \mb N$, such that $\varphi_n$ converges uniformly along a subsequence to some self-homeomorphism $\varphi$ of $\overline \ud$. Moreover, $\gamma:= \varphi([-1,1]) \in \mc X(\ud;-1,1)$ satisfies $I_{\ud;-1,1}(\gamma) \leq c$.
\end{lem}

The convergence in the latter half of the above lemma was established within its proof in \cite{peltola_wang}.

\subsection{Energy of a simple curve and hyperbolic geodesics}

In these subsection, we expand the definition of Loewner energy to curves that end in the interior of the domain and obtain its addtivity property \eqref{eq:additivity}. We then recall basic properties of hyperbolic geodesics that we use in our proof. The facts surveyed here can be found in \cite{W1}.

Let $\gamma: [0,T] \to \uhp \cup \{0\}$ be a Jordan arc (henceforth called a \textit{simple curve}) with $\gamma(0) = 0$ and $T \in (0,\infty)$. For each $t\in [0,T]$, we consider the unique conformal map $g_t: \uhp \setminus \gamma([0,t]) \to \uhp$ with \textit{hydrodynamic normalization}: i.e., satisfying 
\begin{equation} g_t(z) - z \to 0 \quad\text{as}\quad z\to \infty .\end{equation}
The \textit{half-plane capacity} of the curve $\gamma([0,t])$, which we denote $\hcap(\gamma([0,t]))$, is defined via the expansion 
\begin{equation}
    g_t(z) = z + \frac{\hcap(\gamma([0,t]))}{z} + O\bigg(\frac{1}{z^2}\bigg) \quad \text{as} \quad z\to\infty.
\end{equation}
We can reparametrize $\gamma$ on the interval $[0,\frac{1}{2}\hcap(\gamma)]$ so that $\hcap(\gamma([0,t])) = 2t$ for every $t$, which we call the \textit{(half-plane) capacity parameterization} of $\gamma$. For the rest of this subsection, we assume that $\gamma$ is in capacity parameterization with $T = \frac{1}{2}\hcap(\gamma)$.

We call the family of maps $(g_t)_{t\in [0,T]}$ the \textit{Loewner chain} corresponding to the curve $\gamma$, and define the \textit{driving function} of the curve $\gamma$ as the continuous function $t\mapsto \lambda_t:= g_t(\gamma(t))$. For every point $z\in \overline \uhp \setminus \{0\}$, the map $t\mapsto g_t(z)$ is continuously differentiable and satisfies the \textit{Loewner equation}
\begin{equation}\label{eq:loewner-eq}
    \partial_t g_t(z) = \frac{2}{g_t(z) - \lambda_t}
\end{equation}
on the entire interval $[0,T]$ if $z\notin \gamma$, or on the interval $[0,T_z)$ if $z = \gamma(T_z)$ for some $T_z \in (0,T]$.

\begin{df} \label{def:partial-uhp}
Given a simple curve $\gamma$ in $\uhp$ starting at 0, we define its \textit{Loewner energy targeted at $\infty$} as
\begin{equation}\label{eq:def-partial-uhp}
    I_{\uhp;0,\infty}(\gamma):= \frac{1}{2} \int_0^T \left| \frac{d\lambda_t}{dt}\right|^2 dt
\end{equation}
where $\lambda$ is the driving function of the curve $\gamma$ (in capacity parametrization). The integral above is set to be infinite if $\lambda_t$ is not absolutely continuous in $t$.
\end{df}

This extends the definition \eqref{eq:uhp-def} for the Loewner energy of a chord in $\mc X(\uhp;0,\infty)$, which amounts to the case $T=\infty$. For a simple curve $\gamma$ in an arbitrary simply connected domain $D$ starting from a prime end $a$, we define its Loewner energy targeted at a prime end $b$ so that it is conformally invariant: 
\begin{equation}\label{eq:partial-energy-conformal}
    I_{D;a,b}(\gamma) = I_{\uhp;0,\infty}(\varphi(\gamma))
\end{equation}
for any conformal map $\varphi: D\to \uhp$ with $\varphi(a) =0$ and $\varphi(b) = \infty$, analogously to \eqref{eq:energy-conformal-invariant}.

If $\gamma:[0,T]\to \uhp\cup \{0\}$ is a simple curve with $\gamma(0)=0$ in half-plane capacity parameterization, then, for each $s\in (0,T)$, the image $\gamma^s:= g_s(\gamma([s,T])) - \lambda_s$ is also a simple curve in $\uhp$ starting at 0. Its half-plane capacity parameterization is given by $\gamma^s(t) = g_s(\gamma(s+t))-\lambda_s$ on the interval $[0,T-s]$, and the corresponding driving function is $t\mapsto \lambda_{t+s} - \lambda_s$. We thus have
\begin{equation}\label{eq:additivity}
\begin{split}
    I_{\uhp;0,\infty}(\gamma) = \int_0^s \big| \dot \lambda_t \big|^2\,dt + \int_s^T \big| \dot \lambda_t \big|^2\,dt 
    &= I_{\uhp;0,\infty}(\gamma([0,s])) + I_{\uhp;0,\infty}(\gamma^s)\\ &= I_{\uhp;0,\infty}(\gamma([0,s])) + I_{\uhp \setminus \gamma([0,s]); \gamma(s),\infty}(\gamma([s,T]))
    \end{split}
\end{equation}
for each $s\in (0,T)$, as was observed in \cite{W1}. 
This additivity property \eqref{eq:additivity} can be stated for more general domains via conformal invariance \eqref{eq:partial-energy-conformal}, which naturally leads to the idea that the partial Loewner energy $I_{D;a,b}(\gamma_1)$ of a simple curve $\gamma_1$ in a simply connected domain $D$ starting at a prime end $a$ and ending at $\zeta \in D$ represents the Loewner energy of the chord $\gamma_1 \cup \gamma_2 \in \mc X(D;a,b)$ obtained by completing $\gamma_1$ all the way to the prime end $b$ by a chord $\gamma_2 \in \mc X(D\setminus \gamma_1; \zeta,b)$ with zero energy. Such zero energy chords are hyperbolic geodesics and they are central to the theory of Loewner energy.

\begin{df}
    The \textit{hyperbolic geodesic} in a simply connected domain $D\subsetneq \Chat$ from a prime end $a$ to another prime end $b$ is the image $\varphi(i\mathbb R_+)$ of the imaginary axis under a conformal map $\varphi:\mathbb H \to D$ with $\varphi(0)=a$ and $\varphi(\infty) = b$. Note that this specifies a unique chord in $\mc X(D;a,b)$.
\end{df}

For instance, the hyperbolic geodesic in $\uhp$ between $x,y\in \R$ is the semicircle with the interval between $x$ and $y$ as its diameter. The hyperbolic geodesic $\gamma$ in $D$ from $a$ to $b$ has the following useful properties that follow immediately from its definition.

\begin{itemize}
    \item The hyperbolic geodesic $\gamma$ is the unique chord in $\mc X(D;a,b)$ with $I_{D;a,b}(\gamma) = 0$. Any other chord in $\mc X(D;a,b)$ has positive (or infinite) Loewner energy $I_{D;a,b}$.

    \item The reversed chord $\gamma^\rev$ is the hyperbolic geodesic in $D$ from $b$ to $a$. Hence, it makes sense to omit the orientation and call $\gamma$ the hyperbolic geodesic in $D$ between $a$ and $b$.

    \item Given any parameterization $\gamma:(0,T)\to D$ with $\gamma(0) = a$ and $\gamma(T) = b$, for each $t\in (0,T)$, the curve $\gamma(t,T)$ is the hyperbolic geodesic in $D\setminus \gamma([0,t])$ between $\gamma(t)$ and $b$.
\end{itemize}

Let us say that a chord $\gamma \in \mc X(D;a,b)$ has a \textit{geodesic tip} if we can decompose it into simple curves $\gamma_1$ and $\gamma_2$ joined at $\zeta \in D$ such that $\gamma_2$ is the hyperbolic geodesic in $D\setminus \gamma_1$ between $\zeta$ and $b$. (Note that by the third bulleted observation above, a chord $\gamma \in \mc X(D;a,b)$ has a geodesic tip if and only if, given any parameterization $\gamma:(0,T)\to D$ with $\gamma(0)=a$ and $\gamma(T)=b$, there exists a time $s \in (0,T)$ such that the following is true: for every $t \in [s,T)$, the curve $\gamma([t,T])$ is the hyperbolic geodesic in $D\setminus \gamma([0,t])$ between $\gamma(t)$ and $b$.) Then,
\begin{equation}
    I_{D;a,b}(\gamma) = I_{D;a,b}(\gamma_1) + I_{D\setminus \gamma;\zeta,b}(\gamma_2) = I_{D;a,b}(\gamma_1) = I_{D;a,b}(\gamma_1) + I_{D\setminus \gamma_1; b,\zeta}(\gamma_2)
\end{equation}
by \eqref{eq:additivity} and the first two bulleted observations above. Here, we already see that when computing the Loewner energy of a chord with a geodesic tip, we can reverse the orientation of this tip. The the key idea in our proof is to generalize this to the case that the hyperbolic geodesic is in a middle part of the chord (see Lemma~\ref{lem:comm-ineq}).

\subsection{Identifying chords via hulls}

In our proof of the ``commutation inequality" (Lemma~\ref{lem:comm-ineq}), we will need to identify the Hausdorff limit of chords using the convergence of the hulls that they generate. 
For concreteness, let us consider chords in the upper half-plane $\uhp$ between two points on $\R$. Given $\gamma \in \mc X(\uhp;x,y)$, by the Jordan curve theorem, $\uhp \setminus \gamma$ consists of exactly two connected components: a bounded component $D_\gamma$ and an unbounded component $U_\gamma$, both of which are simply connected domains. Moreover, $\partial D_\gamma = \gamma \cup [x,y]$ where $[x,y]\subset \R$ denotes the closed interval between $x$ and $y$ even when $x>y$. We define the \textit{hull} generated by the chord $\gamma$ as the closure $K_\gamma:= \overline{D_\gamma}$ in $\overline \uhp = \uhp \cup \R$. Since $\partial K_\gamma \cap \uhp = \partial D_\gamma \cap \uhp = \gamma$, a bounded chord in $\uhp$ can be uniquely identified by the hull it generates.

\begin{lem}\label{lem:hull-compactness}
    Suppose $(\gamma^n)_{n\in \mb N}$ is a sequence of chords in $\uhp$ between two fixed points $x,y\in \R$ such that $\sup_{n\in \mb N} I_{\uhp;x,y}(\gamma^n) < \infty$. Then, there exists a subsequence $\gamma^{n_k}$ which converges to some chord $\gamma \in \mc X(\uhp;x,y)$ in the Hausdorff distance and, furthermore, the corresponding hulls $K_{\gamma^{n_k}}$ converge to $K_\gamma$ in the Hausdorff distance.
\end{lem}
\begin{proof}
    Without loss of generality, assume $x<y$.
    Fix a M\"obius map $f: \ud \to \mb H$ with $f(-1) = x$ and $f(1)=y$. By Lemma~\ref{lem:lsc}, for each $n$, there exists a homeomorphism $\varphi_n:\overline{\ud} \to \overline {\ud}$ such that $\varphi_n|_{\partial \ud}$ is the identity map and $\varphi_n([-1,1]) = f^{-1}(\gamma_n)$. Furthermore, we can find a subsequence $\varphi_{n_k}$ which converges uniformly to a homeomorphism $\varphi:\overline{\ud} \to \overline{\ud}$. Then, $\gamma_{n_k}$ converges to $\gamma:= (f\circ \varphi)([-1,1])\in \mc X(\uhp;x,y)$ in the Hausdorff distance.
    
    Denote by $D$ the lower half-disk $\{z\in \ud: \mathrm{Im}(z)<0\}$ and note that the bounded connected component of $\uhp \setminus \gamma_n$ is $(f\circ \varphi_n)(D)$. Since the homeomorphisms $f\circ \varphi_{n_k}:\overline{\ud}\to \overline{\mb H}\cup \{\infty\}$ converge uniformly, the hulls $K_{\gamma_{n_k}} = \overline{(f\circ\varphi_{n_k})({D_\eta})} = (f\circ \varphi_{n_k})(\overline{D})$ also converge in the Hausdorff distance to $K_\gamma = \varphi(\overline{D})$.
\end{proof}

Our definition of the hull $K_\gamma$ generated by a bounded chord $\gamma$ in $\uhp$ is in agreement with the following more general definition of a hull in $\uhp$.
Let $\mc C$ be the space of non-empty closed subsets of $\overline \uhp$. The set of \textit{compact $\uhp$-hulls} is defined as
\begin{equation*}
    \mc K := \{K \in \mc C: \text{$K$ is bounded, $\uhp\setminus K$ is simply connected, and $\overline{K\cap \uhp} = K$}\}.
\end{equation*}
Indeed, since $D_\gamma$ is bounded, $K_\gamma$ is bounded. Moreover, $K_\gamma \cap \uhp = D_\gamma \cup (\partial D_\gamma \cap \uhp) = D_\gamma \cup \gamma$, so $\uhp \setminus K_\gamma = (\uhp \setminus \gamma)\setminus D_\gamma = U_\gamma$, which is simply connected. Also, $\overline{K_\gamma \cap \uhp} = \overline{D_\gamma} \cup \overline{\gamma} = K_\gamma \cup (\gamma \cup \{x,y\}) = K_\gamma$. Thus, the hull $K_\gamma$ defined as the closure of the bounded component of $\uhp \setminus \gamma$ in $\overline \uhp$ is an element of $\mc K$.

For each hull $K \in \mc K$, there exists a unique conformal transformation $g_K: \uhp \setminus K \to \uhp$ such that $g_K(z) -z \to 0$ as $z\to \infty$, which we refer to as the \textit{mapping-out function} of $K$. This induces the following alternative topology on $\mc K$ which is natural from the perspective of Loewner chains.

\begin{df}\label{def:cara}
Let $(K^n)_{n\in\mb N}$ be a sequence compact $\uhp$-hulls. 
We say that $K^n$ converges to $K\in \mc K$ in the \textit{Carath\'eodory topology} if $g_{K^n}^{-1}$ converges to $g_K^{-1}$ uniformly away from $\R$. This means that for every $\ep>0$, $g_{K^n}^{-1}$ converges uniformly to $g_K^{-1}$ on $\{z\in \C: \Im z > \ep\}$. 
\end{df}

An equivalent geometric definition is available via Carath\'eodory kernel theorem (see, e,g., \cite[Sec.\ 3.3]{Beliaevs_book}), from which the following topological lemma follows. This allows us to identify the limiting chord in Lemma~\ref{lem:lsc} through the Carath\'eodory limit of the hulls.

\begin{lem}[\!{\cite[Lem.\ 2.3]{peltola_wang}}]
    \label{lem:same-limit}
    Let $(K^n)_{n \in \mb N}$ be a sequence in $\mc K$. If $K^n$ converges to $K \in \mc K$ with respect to the Carath\'eodory topology and to a compact set $K^* \subset \overline\uhp$ in the Hausdorff distance, then $\uhp \setminus K$ coincides with the unbounded connected component of $\uhp \setminus K^*$. In particular, if $K^* \in \mc K$, then $K = K^*$.
\end{lem}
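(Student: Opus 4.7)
The plan is to identify the unbounded component of $\uhp \setminus K$ directly with the Carath\'eodory kernel of $(\uhp \setminus K^n)$, which by the kernel characterization stated just above the lemma equals $\uhp \setminus \tilde K$. Write $V_n$ for the unbounded connected component of $\bigcap_{k \geq n}(\uhp \setminus K^k)$; since components of open sets are open, each $V_n$ is open and hence equals its own interior, and the kernel description gives $\uhp \setminus \tilde K = \bigcup_n V_n$. Denote by $U$ the unbounded component of $\uhp \setminus K$. I would prove the matching inclusions $V_n \subset U$ for every $n$ and $U \subset \bigcup_n V_n$ by using $K^n \hto K$ to transfer information between the two topologies.

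For $\bigcup_n V_n \subset U$: since $K$ is bounded, the Hausdorff convergence gives a uniform $R$ with $K^n \subset \{|z|\leq R\}$ eventually, so each $V_n$ (with $n$ large) is unbounded. Given $z \in V_n$, pick an open ball $B \subset V_n$ around $z$; then $B \cap K^k = \varnothing$ for every $k \geq n$, and if $B$ met $K$, Hausdorff convergence would place some point of $K^k$ in $B$, a contradiction. Hence $V_n$ is a connected, unbounded subset of $\uhp \setminus K$ and therefore contained in $U$.

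For $U \subset \bigcup_n V_n$: fix $z \in U$ and choose $R$ large enough that $K \cup \bigcup_{n \geq N_0} K^n \subset \{|z|\leq R\}$. Since $U$ is open, connected, and contains $\{w \in \uhp : |w| > R\}$, I can find a path $\gamma \subset U$ from $z$ to some $w$ with $|w| > R$ and then prolong $\gamma$ within $\{w \in \uhp : |w| > R\}$ out to arbitrarily large radius. The compact initial segment has positive distance $\ep$ from $K$, so Hausdorff convergence forces $\gamma \cap K^k = \varnothing$ for all sufficiently large $k$; the tail avoids every $K^k$ with $k \geq N_0$ by choice of $R$. Thus for some $N$ the whole path lies in $\bigcap_{k \geq N}(\uhp \setminus K^k)$ and is unbounded, placing $z$ in the unbounded component $V_N$. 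Combining both inclusions gives $\uhp \setminus \tilde K = U$. The ``in particular'' assertion is then immediate: when $K \in \mc K$, $\uhp \setminus K$ is simply connected and thus coincides with its unbounded component $U = \uhp \setminus \tilde K$, and the defining property $\overline{K \cap \uhp} = K$ shows that $K$ is determined by its complement in $\uhp$, so $K = \tilde K$.

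I expect the reverse inclusion to be the main technical step: one must stitch together a path from $z$ that simultaneously avoids all sufficiently late $K^k$, both on a compact piece near $z$ (where Hausdorff convergence of the $K^n$ is what prevents intruders) and on an unbounded tail (where the uniform bound on $\diam(K^n)$ coming from the same Hausdorff hypothesis lets a single tail work for every large $k$). This is precisely the place where Carath\'eodory convergence alone would not suffice, and where the boundedness of the Hausdorff limit $K$ is used crucially.
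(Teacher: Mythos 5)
The paper gives no proof of this lemma (it is imported from \cite[Lem.~2.3]{peltola_wang}), so your argument stands on its own. The overall strategy is the right one: identify the kernel of $(\uhp\setminus K^n)$ with the unbounded component $U$ of $\uhp\setminus K$ by two inclusions, using the Hausdorff convergence to pass information both ways; your second inclusion (compact path segment at positive distance from $K$, plus an unbounded tail outside a disk containing all the hulls) is essentially sound. The genuine gap is at the very first step: $\bigcap_{k\geq n}(\uhp\setminus K^k)$ is an intersection of \emph{infinitely many} open sets and need not be open, so ``components of open sets are open'' does not apply, and your claim that $V_n$ is open is false in general. This is not cosmetic: it fails exactly in the pinching situations the lemma is designed for. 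Take $K^k=\{e^{i\theta}:\theta\in[\pi/2+1/k,\pi]\}\cup\{e^{i\theta}:\theta\in[0,\pi/2-1/k]\}$, a two-arc variant of the paper's half-disk example; then $K^k\hto K$, the full closed unit semicircle, while $K^k\cto\tilde K$, the closed half-disk. Here $\bigcap_{k\geq n}(\uhp\setminus K^k)$ is the union of the open inside of the half-disk, the unbounded outside region, and the single point $i$; its unbounded component is this whole set (connected through $i$), it is not open at $i$, and it contains $i\in K$. So with your definition of $V_n$, the step ``pick an open ball $B\subset V_n$ around $z$'', the asserted inclusion $V_n\subset\uhp\setminus K$, and the imported identity $\uhp\setminus\tilde K=\bigcup_n V_n$ (whose left side is only the outside region) all break down.

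The repair is to work with the interior, as the kernel definition requires: set $V_n$ to be the unbounded connected component of $\uhp\setminus\overline{\bigcup_{k\geq n}K^k}$, i.e.\ the component of the interior of $\bigcap_{k\geq n}(\uhp\setminus K^k)$ containing a neighborhood of $\infty$ (note the paper's phrase ``interior of the unbounded connected component'' must also be read this way, since the interior of a connected set need not be connected: in the example it is the disconnected union of inside and outside). With this $V_n$, which is open, connected and unbounded, your first inclusion goes through verbatim: the ball exists, avoids every $K^k$ with $k\geq n$, hence avoids $K$ by Hausdorff convergence, and a connected unbounded subset of $\uhp\setminus K$ lies in $U$ because boundedness of $K$ forces a unique unbounded component. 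In the second inclusion you must then land $z$ in this \emph{open} set rather than merely in the intersection: thicken the compact initial segment of your path to a tube of radius less than one third of its distance to $K$ (Hausdorff convergence keeps all late $K^k$ out of the tube, and hence out of the closure of their union), take its union with $\{w\in\uhp:|w|>R\}$, and observe that this open, connected, unbounded set containing $z$ lies in $V_N$ for $N$ large. With these repairs your proof is complete, and the ``in particular'' step is fine as you wrote it.
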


\section{Proof of reversibility of chordal Loewner energy}\label{sec:proof}

In this section, we first establish the inequality version of the commutation relation for Loewner energy when two curves are joined at the same endpoint (Lemma~\ref{lem:comm-ineq}). Based on this result, we show that a minimal energy chord passing through a fixed finite set of points is piecewise geodesic, and the Loewner energy of any piecewise geodesic chord is invariant under reversing its orientation (Proposition~\ref{prop:minimizer}). We use this to give a proof of Theorem~\ref{thm:main} at the end of this section.

\subsection{A commutation inequality}

The starting point of our proof of Theorem~\ref{thm:main} is the following commutation relation for Loewner energy given in \cite[Cor.\ 4.4]{W1}.

\begin{thmx}\label{thm:commutation}
    Let $D$ be a simply connected domain and $a,b$ be distinct prime ends. Suppose $\gamma$ is a simple curve $D$ from $a$ to $\gamma(S) \in D$, and $\eta$ is a simple curve in $D$ from $b$ to $\eta(T) \in D$ with $\gamma\cap \eta = \varnothing$. Then,
    \begin{equation}\label{eq:communtation}
        I_{D;a,b}(\gamma) + I_{D\setminus\gamma ;b,\gamma(S)}(\eta) = I_{D;b,a}(\eta) + I_{D \setminus \eta;a,\eta(T)}(\gamma).
    \end{equation}
\end{thmx}

In other words, we can compute the joint Loewner energy of $\gamma$ and $\eta$ by computing the energy of either $\gamma$ or $\eta$ first, then adding the energy of the other curve in the complement of the first.
The main restriction in Theorem~\ref{thm:commutation} is that the curves $\gamma$ and $\eta$ cannot intersect, even at their endpoints. 
Nevertheless, we can deduce from it a ``commutation inequality" satisfied by two curves terminating at the same interior point.

\begin{lem}\label{lem:comm-ineq}
    Let $D$ be a simply connected domain with distinct prime ends $a$ and $b$. Suppose $\gamma$ and $\eta$ are disjoint simple curves in $D$ starting from $a$ and $b$, respectively, except that they both terminate at $\zeta \in D$. Then,
    \begin{equation}\label{eq:comm-ineq-1}
        I_{D;b,a}(\eta) \leq I_{D;a,b}(\gamma) + I_{D\setminus \gamma;b,\zeta}(\eta) .
    \end{equation}
    Furthermore, if $\eta$ has a geodesic tip when considered as a chord in $D\setminus\gamma$ from $b$ to $\zeta$,\footnote{That is, $\eta$ can be partitioned into simple curves $\eta_1$, from $b$ to an intermediate point $\xi \in \eta$, and $\eta_2$, from $\xi$ to $\zeta$, such that $\eta_2$ is the hyperbolic geodesic in $D\setminus (\gamma \cup \eta_1)$ between $\xi$ and $\zeta$.} then
    \begin{equation}\label{eq:comm-ineq}
        I_{D;b,a}(\eta) + I_{D\setminus \eta;a,\zeta}(\gamma) \leq I_{D;a,b}(\gamma) + I_{D\setminus \gamma;b,\zeta}(\eta) .
    \end{equation}
\end{lem}

\begin{proof}
    Take any continuous parameterization $\eta:(0,T] \to D$ with $\eta(0)=b$ and $\eta(T) = \zeta$. For $t\in (0,T)$, since $\gamma$ and $\eta((0,t])$ do not intersect, the commutation relation (Theorem~\ref{thm:commutation}) states
    \begin{equation}\label{eq:comm-ineq-2}
        I_{D;b,a}(\eta([0,t])) + I_{D\setminus \eta([0,t]);a,\eta(t)}(\gamma) = I_{D;a,b}(\gamma) + I_{D\setminus \gamma;b,\zeta}(\eta([0,t])).
    \end{equation}
    Note from the definition of the Loewner energy of a curve \eqref{eq:def-partial-uhp}--\eqref{eq:partial-energy-conformal} that
    \begin{equation}
        \lim_{t \to T^-} I_{D;b,a}(\eta([0,t])) = I_{D; b,a}(\eta) \quad \text{and} \quad \lim_{t\to T^-} I_{D\setminus \gamma;b,\zeta}(\eta([0,t])) = I_{D\setminus \gamma; b,\zeta}(\eta).
    \end{equation}
    We thus obtain
    \begin{equation}\label{eq:comm-ineq-3}
        \lim_{t\to T^-} I_{D\setminus \eta([0,t]);a,\eta(t)}(\gamma) = I_{D;a,b}(\gamma) + I_{D\setminus \gamma;b,\zeta}(\eta) - I_{D;b,a}(\eta).
    \end{equation}
    The first inequality \eqref{eq:comm-ineq-1} follows from the simple observation that the left-hand side of \eqref{eq:comm-ineq-3} must be nonnegative by the definition of Loewner energy.
    
    If we furthermore had  
    \begin{equation}\label{eq:comm-ineq-4}
        I_{D\setminus\eta;a,\zeta}(\gamma) \leq \lim_{t\to T^-} I_{D\setminus \eta([0,t]);a,\eta(t)}(\gamma),
    \end{equation}
    then we would obtain the second inequality \eqref{eq:comm-ineq} from \eqref{eq:comm-ineq-3}. 
    For the rest of this proof, we prove the inequality \eqref{eq:comm-ineq-4} in the case that $\eta \in \mc X(D\setminus \gamma;b,\zeta)$ has a geodesic tip. That is, suppose there exists $T_0 \in [0,T)$ such that $\eta([T_0,T])$ is the hyperbolic geodesic in $D \setminus (\gamma \cup \eta([0,T_0]))$ between $\eta(T_0)$ and $\zeta = \eta(T)$. By the conformal invariance \eqref{eq:partial-energy-conformal} of Loewner energy, we may assume that $D = \uhp$, $a=1$, and $b=0$. Let us further assume that $\eta$ is in half-plane capacity parameterization and let $(g_t)_{t\in [0,T]}$ be the Loewner chain corresponding to the curve $\eta$. 
    \begin{figure}
        \centering
        \includegraphics[width=.8\linewidth]{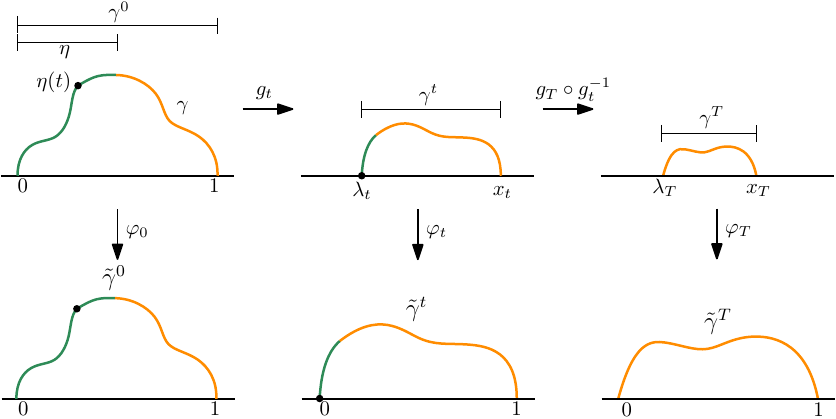}
        \caption{Proof of Lemma~\ref{lem:comm-ineq}. Let $(g_t)_{t\in [0,T]}$ be the Loewner chain mapping out $\eta$, and denote by $\gamma^t$ the chord remaining after mapping out $\gamma \cup \eta$ by $g_t$. Let $\tilde \gamma^t$ be the image of $\gamma^t$ under an affine transformation that maps the endpoints of the chord to 0 and 1. The hulls enclosed by $\gamma^t$ converges in the Carath\'eodory topology to that enclosed by $\gamma^T$, and the chords $\tilde \gamma^t$ converge along a subsequence to some $\tilde \gamma^*\in \mc X(\uhp;1,0)$ in Hausdorff distance. We have $\tilde \gamma^* = \varphi_T(\gamma^T)$ from Lemma~\ref{lem:same-limit}. }
        \label{fig:comm-ineq}
    \end{figure}
    As illustrated in Figure~\ref{fig:comm-ineq}, let us denote
    \begin{equation*}
        \gamma^t:= g_t(\gamma \cup \eta([T,t])) \in \mc X(\uhp;x_t,\lambda_t)
    \end{equation*}
    where $\lambda_t:= g_t(\eta(t))$ is the driving function of the Loewner chain and $x_t:= g_t(1)$. Note that $x_t$ and $\lambda_t$ are continuous real-valued functions satisfying $\lambda_t \neq x_t$ for $t\in [0,T]$. For each $t$, let $\varphi_t(z) = (z-\lambda_t)/(x_t-\lambda_t)$ be the conformal automorphism of $\uhp$ sending the triple $(\lambda_t,x_t,\infty)$ to $(0,1,\infty)$ and denote
    \begin{equation*}\tilde \gamma^t:= \varphi_t(\gamma^t) \in \mc X(\uhp;0,1) .\end{equation*} 
    For $t\in [T_0,T]$, since $\eta([T,t])$ is the hyperbolic geodesic in $D\setminus (\gamma \cup \eta([0,t]))$ between $\zeta = \eta(T)$ and $\eta(t)$, we have  
    \begin{equation}\begin{split} \label{eq:comm-ineq-5}
        I_{\uhp;1,0}(\tilde \gamma^t) = I_{\uhp;x_t,\lambda_t}(\gamma^t) 
        &= I_{\uhp \setminus \eta([0,t]);1,\eta(t)}(\gamma \cup \eta([T,t])) \\& = I_{\uhp \setminus \eta([0,t]);1,\eta(t)}(\gamma) + I_{\uhp\setminus (\gamma \cup \eta([0,t]));\zeta,\eta(t)}(\eta([T,t]))
        \\& = I_{\uhp \setminus \eta([0,t]);1,\eta(t)}(\gamma).\end{split}
    \end{equation}
    From \eqref{eq:comm-ineq-2}, we have
    \begin{equation} I_{\uhp;0,1}(\tilde \gamma^t)= I_{\uhp \setminus \eta([0,t]);1,\eta(t)}(\gamma) \leq I_{\uhp;1,0}(\gamma) + I_{\uhp \setminus \gamma;0,\zeta}(\eta) =: c\end{equation}
    for all $t \in [T_0,T]$. The lemma holds trivially if $c=\infty$, so let us assume $c<\infty$. By Lemma~\ref{lem:lsc}, there exists a sequence $(t_k)_{k\in \mathbb N}$ increasing to $T$ and a chord $\tilde \gamma^* \in \mc X(\uhp;1,0)$ such that $\tilde \gamma^{t_k} \to \tilde \gamma^*$ and $K_{\tilde \gamma^{t_k}} \to K_{\tilde \gamma^*}$ in the Hausdorff distance. 

    We claim that $\tilde \gamma^* = \tilde \gamma^T$, for which it suffices to show $\gamma^*:= \varphi_T^{-1}(\tilde \gamma^*) = \gamma^T$. Recalling that $x_{t_k} \to x_T$ and $\lambda_{t_k} \to \lambda_T$, the affine maps $\varphi_{t_k}$ converge uniformly on bounded subsets of $\overline \uhp$ to $\varphi_T$. Thus, the rescaled hulls $K_{\gamma^{t_k}} = \varphi_{t_k}^{-1} (K_{\tilde \gamma^{t_k}})$ converge to $K_{\gamma^*} = \varphi_T^{-1}(K_{\tilde \gamma^*})$ in the Hausdorff distance. On the other hand, observe that if $f_T: \uhp \setminus K_{\gamma^T} \to \uhp$ is the mapping-out function of the hull $K_{\gamma^T}$ (i.e., the unique hydrodynamically normalized conformal map), then $f_t:= f_T \circ g_T \circ g_t^{-1}$ is the mapping-out function of the hull $K_{\gamma^t}$. We see from the Loewner equation \eqref{eq:loewner-eq} that $g_t \to g_T$ uniformly away from $\R$ as $t\to T^-$. Since $f_T \circ g_T$ is hydrodynamically normalized, we conclude that $f_t^{-1} \to f_T^{-1}$ uniformly away from $\R$ and thus $K_{\gamma^t} \to K_{\gamma^T}$ in the Carath\'eodory topology as $t\to T^-$. By Lemma~\ref{lem:same-limit}, $K_{\gamma^*} = K_{\gamma^T}$ and hence $\gamma^* = \partial K_{\gamma^*} \cap \uhp  = \partial K_{\gamma^T} \cap \uhp = \gamma^T$ as claimed. In particular,
    \begin{equation*}
        I_{\uhp;1.0}(\tilde \gamma^*) = I_{\uhp;1,0}(\gamma^T) = I_{\uhp\setminus \eta;1,\zeta}(\gamma).
    \end{equation*}
    Finally, by Lemma~\ref{lem:lsc},
    \begin{equation}
         I_{\uhp\setminus \eta;1,\zeta}(\gamma) = I_{\uhp;1,0}(\tilde \gamma^*) \leq \liminf_{k\to\infty} I_{\uhp;1,0}(\tilde \gamma^{t_k}) = \lim_{t\to T^-} I_{\uhp;1,0}(\tilde \gamma^t) = \lim_{t\to T^-} I_{\uhp \setminus \eta([0,t]);1,\eta(t)}(\gamma).
    \end{equation}
    This conclusion is equivalent to \eqref{eq:comm-ineq-4} by the conformal invariance of Loewner energy.
    \end{proof}

\subsection{Piecewise geodesic energy minimizers}

Let $D$ be a simply connected domain with two distinct prime ends $a$ and $b$. Recall that $\mc X(D;a,b)$ is the set of all chords in $D$ from $a$ to $b$. Given a finite set $\underline z \subset D$, let 
\begin{equation}
    \mc X(D;a,b;\underline z) = \{\gamma \in \mc X(D;a,b): \underline z \subset \gamma\}
\end{equation}
be the set of chords in $D$ from $a$ to $b$ passing through all points in $\underline z$. We first observe that there exists a Loewner energy minimizer on this set.

\begin{lem}\label{lem:existence}
    Given any finite $\underline z \subset D$, there exists a chord $\gamma^{\mathrm{min}} \in \mc X(D;a,b;\underline z)$ such that 
    \begin{equation}
        I_{D;a,b}(\gamma^{\mathrm{min}}) := \inf_{\gamma \in \mc X(D;a,b;\underline z)}I_{D;a,b}(\gamma) < \infty.
    \end{equation}
\end{lem}
\begin{proof}
    Without loss of generality, let $D$ be a bounded Jordan domain.
    The existence of a finite-energy chord in $\mc X(D;a,b;\underline z)$ is given in \cite[Lem.\ 3.3]{W1}. Let us thus take a sequence of finite-energy chords $(\gamma^n)_{n\in \mb N}$ in $\mc X(D;a,b;\underline z)$ whose energies decrease to $\inf_{\gamma \in \mc X(D;a,b;\underline z)}I_{D;a,b}(\gamma)<\infty$. By Lemma~\ref{lem:lsc}, there exists a subsequence $\gamma^{n_k}$ which converges in the Hausdorff distance to some chord $\gamma^{\mathrm{min}} \in \mc X(D;a,b)$. We furthermore have $I_{D;a,b}(\gamma^{\mathrm{min}}) \leq \liminf_{k\to\infty} I_{D;a,b}(\gamma^{n_k})$. Observe that $\max_{z \in \underline z} \min_{w \in \gamma^{\mathrm{min}}} |z-w|$ is bounded above by the Hausdorff distance between $\gamma^{n_k}$ and $\gamma^{\mathrm{min}}$ for every $n_k$. Since the latter converges to 0 as $k\to\infty$, we have $\underline z \subset \gamma^{\mathrm{min}}$.
\end{proof}

The key observation in our proof of Theorem~\ref{thm:main} is that the energy minimizer in Lemma~\ref{lem:existence} has the following property, which was introduced in \cite{piecewise-geodesic-i} for the case of Jordan curves.

\begin{df}
Let $\underline z \subset D$ be a finite set. We say that a chord $\gamma \in \mc X(D;a,b;\underline z)$ is \textit{piecewise geodesic relative to $\underline z$} if $\underline z$ partitions $\gamma$ into simple curves $\gamma_1,\gamma_2,\dots,\gamma_n$ such that each $\gamma_k$ is a hyperbolic geodesic of the domain $D \setminus (\gamma\setminus \gamma_k)$.
\end{df}

In other words, a chord $\gamma:[0,1]\to D\cup\{a,b\}$ visiting $\underline z \cup \{a,b\}$ at times $0 = t_0 < t_1 <\dots < t_{n-1} < t_n = 1$ is piecewise geodesic relative to $\underline z$ if, for every $k=1,\dots,n$, the chord $\gamma([t_{k-1},t_k])$ is the hyperbolic geodesic in $D\setminus \gamma([0,t_{k-1}]\cup [t_k,1])$ between $\gamma(t_{k-1})$ and $\gamma(t_k)$.

\begin{prop} \label{prop:minimizer}
    Let $D$ be a simply connected domain with distinct prime ends $a,b$ and let $\underline z$ be a finite subset of $D$.
    \begin{enumerate}
        \item If $\gamma$ is a chord in $\mc X(D;a,b;\underline z)$ satisfying $I_{D;a,b}(\gamma) = \inf_{\eta \in \mc X(D;a,b,\underline z)} I_{D;a,b}(\eta)$, then $\gamma$ is piecewise geodesic relative to $\underline z$. 
        
        \item If $\gamma \in \mc X(D;a,b;\underline z)$ is piecewise geodesic relative to $\underline z$, then its energy is invariant under the reversal of orientation. That is, $I_{D;a,b}(\gamma) = I_{D;b,a}(\gamma)$.
    \end{enumerate}
    
\end{prop}

\begin{proof}
    Take an arbitrary chord $\gamma \in \mc X(D;a,b;\underline z)$ and let $z_1,z_2,\dots,z_n$ be the enumeration of $\underline z$ in the order visited by $\gamma$. For each $k=1,2,\dots,n+1$, let us denote the part of the chord $\gamma$ between $z_{k-1}$ and $z_k$ as $\gamma_k$ (where $z_0=a$ and $z_{n+1}=b$). We shall reverse the orientation of $\gamma$ inductively, finishing with a chord $\eta \in \mc X(D;b,a;\underline z)$ with $I_{D;b,a}(\eta) \leq I_{D;a,b}(\gamma)$. Furthermore, we will show that the inequality is strict if $\gamma$ is not piecewise geodesic relative to $\underline z$. Let us denote $D_k:= D\setminus(\bigcup_{i \leq k} \gamma_i)$ for the rest of this proof.

    \begin{itemize}
        \item In the first step, replace $\gamma_{n+1}$ with the hyperbolic geodesic in $D_n$ from $b$ to $z_n$, which we denote $\eta_{n+1}$. Since $I_{D_n;b,z_n}(\eta_{n+1})=0$, we have
        \begin{equation}\begin{split}\label{eq:rev-1}
            I_{D;a,b}(\gamma) &= I_{D;a,z_n}\Bigg( \bigcup_{i\leq n} \gamma_i \Bigg) + I_{D_n;z_n,b}\big(\gamma_{n+1}\big)\\
            &\geq I_{D;a,z_n}\Bigg( \bigcup_{i\leq n} \gamma_i \Bigg) + I_{D_n;b,z_n}\big(\eta_{n+1}\big).
            \end{split}
        \end{equation}
        Here, the inequality is strict if $\gamma_{n+1}$ is not a hyperbolic geodesic (i.e., $\gamma_{n+1}$ is not the orientation reversal of $\eta_{n+1}$). 
        Note that $(\bigcup_{i\leq n} \gamma_i) \cup \eta_{n+1} \in \mc X(D;a,b;\underline z)$.

        \item \begin{figure}
            \centering
            \includegraphics[width=\linewidth]{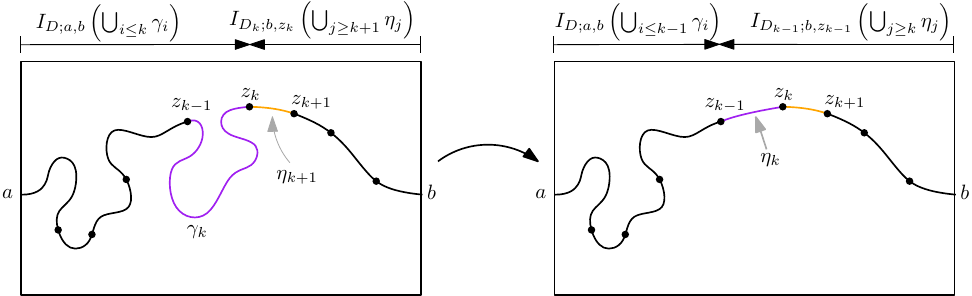}
            \caption{An orientation reversal step in the proof of Proposition~\ref{prop:minimizer}. Left: The curves $\gamma_{k+1},\dots,\gamma_{n+1}$ to the right of $z_k$ have been replaced with $\eta_{k+1},\dots,\eta_{n+1}$ in the previous steps. The orange curve $\eta_{k+1}$ is a hyperbolic geodesic in the complement of black and purple parts of the chord. Right: We replace the purple curve $\gamma_k$ with a hyperbolic geodesic $\eta_k$ in the complement of black and orange parts of the chord. The total energy of the curves in the right figure is less than or equal to that in the left figure. Moreover, if $\gamma_k \neq \eta_k$, then the inequality is strict. }
            \label{fig:geodesic-replacement}
        \end{figure}
        
        Let $k \in \{1,\dots,n\}$. Suppose we have replaced each $\gamma_j$ satisfying $j>k$ with a curve $\eta_j$ between $z_{j-1}$ and $z_j$ so that $(\bigcup_{i\leq k} \gamma_i) \cup (\bigcup_{j\geq k+1}\eta_j)$ is a chord in $\mc X(D;a,b;\underline z)$. Suppose, furthermore, that $\eta_{k+1}$ is the hyperbolic geodesic in $D_k \setminus (\bigcup_{j \geq k+2} \eta_j)$ between $z_{k+1}$ and $z_{k+2}$, and
        \begin{equation}\label{eq:rev-2}
            I_{D;a,b}(\gamma) \geq I_{D;a,z_k}\Bigg( \bigcup_{i\leq k} \gamma_i \Bigg) + I_{D_k;b,z_k}\Bigg(\bigcup_{j\geq k+1} \eta_j\Bigg).
        \end{equation}
        Now, replace $\gamma_k$ with the hyperbolic geodesic in $D_{k-1} \setminus (\bigcup_{j \geq k+1} \eta_j)$ from $z_k$ to $z_{k-1}$, which we denote $\eta_k$; see Figure~\ref{fig:geodesic-replacement} for an illustration.
        Since $\bigcup_{j\geq k+1}\eta_j$ has a geodesic tip as a chord in $D_k$, Lemma~\ref{lem:comm-ineq} implies
        \begin{equation}\begin{split}\label{eq:rev-3}
            &I_{D_{k-1};z_{k-1},z_k}\big(\gamma_k\big) + I_{D_{k-1}\setminus \gamma_k;b,z_k}\Bigg(\bigcup_{j\geq k+1}\eta_j\Bigg) \\ 
            &\geq I_{D_{k-1};b,z_{k-1}}\Bigg(\bigcup_{j\geq k+1}\eta_j\Bigg) + I_{D_{k-1} \setminus (\bigcup_{j\geq k+1}\eta_j);z_{k-1},z_k}\big(\gamma_k\big)\\
            &\geq I_{D_{k-1};b,z_{k-1}}\Bigg(\bigcup_{j\geq k+1}\eta_j\Bigg) + I_{D_{k-1}\setminus (\bigcup_{j\geq k+1}\eta_j);z_k,z_{k-1}}\big(\eta_k\big) 
            = I_{D_{k-1};b,z_{k-1}}\Bigg(\bigcup_{j\geq k}\eta_j\Bigg).
        \end{split}\end{equation}
        Note that the second inequality in \eqref{eq:rev-3} is strict if $\gamma_k$ is not a hyperbolic geodesic (i.e., $\gamma_k$ is not the orientation reversal of $\eta_k$).
        The new chord $(\bigcup_{i\leq k-1}\gamma_k) \cup (\bigcup_{j\geq k} \eta_j)$  satisfies the induction hypothesis since it passes through all points in $\underline z$ and, combining \eqref{eq:rev-2} and \eqref{eq:rev-3}, we have
        \begin{equation}
            \begin{split}\label{eq:rev-4}
                I_{D;a,b}\big(\gamma\big) &\geq I_{D;a,z_k}\Bigg( \bigcup_{i\leq k} \gamma_i \Bigg) + I_{D_k;b,z_k}\Bigg(\bigcup_{j\geq k+1} \eta_j\Bigg)\\
                &= I_{D;a,z_{k-1}}\Bigg( \bigcup_{i\leq k-1} \gamma_i \Bigg) + I_{D_{k-1};z_{k-1},z_k}\big(\gamma_k\big) + I_{D_k;b,z_k}\Bigg(\bigcup_{j\geq k+1}\eta_j\Bigg)\\
                &\geq I_{D;a,z_{k-1}}\Bigg( \bigcup_{i\leq k-1} \gamma_i \Bigg) + I_{D_{k-1};b,z_{k-1}}\Bigg(\bigcup_{j\geq k}\eta_j\Bigg).
            \end{split}
        \end{equation}
    \end{itemize}
    After completing this above reversal procedure all the way from $k = n+1$ to $k=1$, we are left with a chord $\eta:= \bigcup_{j=1}^{n+1}\eta_j \in \mc X(D;a,b;\underline z)$ satisfying $I_{D;b,a}(\eta) \leq I_{D;a,b}(\gamma)$. 
    By construction, if $\gamma$ is not piecewise geodesic relative to $\underline z$, then we cannot have $\eta_k = \gamma_k$ modulo orientation for all $k$. In this case, we have a strict inequality $I_{D;b,a}(\eta) < I_{D;a,b}(\gamma)$. 
    Applying the above inductive reversal procedure again to $\eta$, we obtain a chord $\tilde \gamma \in \mc X(D;a,b;\underline z)$ such that $I_{D;a,b}(\tilde \gamma) \leq I_{D;b,a}(\eta) < I_{D;a,b}(\gamma)$. Hence, if $\gamma$ minimizes the energy $I_{D;a,b}$ among chords passing through $\underline z$, then it must be piecewise geodesic relative to $\underline z$.

    If $\gamma$ is piecewise geodesic relative to $\underline z$, then $\eta_k = \gamma_k$ modulo orientation in each step of the above reversal procedure. Then, the chord $\eta$ that we obtain through the reversal procedure is simply $\gamma$ in reverse orientation and we have $I_{D;b,a}(\gamma) \leq I_{D;a,b}(\gamma)$. Note that the piecewise geodesic property of $\gamma$ is invariant under reversing its orientation, whence $I_{D;b,a}(\gamma) \geq I_{D;a,b}(\gamma)$ as well. We conclude that $I_{D;a,b}(\gamma) = I_{D;b,a}(\gamma)$ if $\gamma$ is piecewise geodesic.
\end{proof}

    We note that the loop version of Proposition~\ref{prop:minimizer} was given in \cite[Prop.\ 2.13]{RW} using the root-invariance of loop Loewner energy. The latter fact was proved in the same work using the the chordal reversibility as a preliminary.

The final input needed for our proof of Theorem~\ref{thm:main} is the following consequence of the Jordan curve theorem.

\begin{lem}\label{lem:chord-inclusion}
    Let $D \subsetneq \mb C$ be a bounded Jordan domain and suppose $\gamma$ and $\tilde \gamma$ are chords in $D$ between distinct boundary points $a$ and $b$. If $\gamma \subseteq \tilde \gamma$, then $\gamma = \tilde \gamma$.
\end{lem}
\begin{proof}
    Suppose, for contradiction, that there is a point $z \in \tilde \gamma \setminus \gamma$. It must be in $D$ since $\tilde \gamma \setminus D = \gamma \setminus D = \{a,b\}$. By the Jordan curve theorem, $D\setminus \gamma$ has two connected components, say $U$ and $V$, such that $D \cap \partial U = D \cap \partial V = \gamma \setminus \{a,b\}$ (see, e.g., \cite[Prop.\ 2.12]{Pommerenke_boundary} for a proof). Let $U$ be the component containing $z$. 
    
    Now, $D\setminus \tilde \gamma$ also has exactly two connected components, say $\widetilde U$ and $\widetilde V$. Since $D\setminus \tilde \gamma \subset D\setminus \gamma$, the two components $\widetilde U$ and $\widetilde V$ are each a subset of either $U$ or $V$ but not both. However, $z \in \tilde \gamma = D \cap \partial \widetilde U = D \cap \partial \widetilde V$, which means that both $\widetilde U$ and $\widetilde V$ are subsets of $U$. This implies $V \subset \tilde \gamma$ and hence $V \subset D \cap \partial \widetilde U$, which is a contradiction since $V \cap (D \cap \partial \widetilde U) \subset V \cap (D \cap \overline U) = V \cap (U \cup \gamma) = \varnothing$.
\end{proof}

We are now ready to prove the orientation reversibility of chordal Loewner energy.

\begin{proof}[Proof of Theorem~\ref{thm:main}]
    Without loss of generality, assume that $D$ is a bounded Jordan domain.
    Suppose $\gamma$ is a chord in $D$ from $a$ to $b$ with $I_{D;a,b}(\gamma) < \infty$. Choose an increasing sequence $\underline z^1 \subset \underline z^2 \subset \cdots $ of finite subsets of $\gamma \cap D$ such that $\bigcup_{n\geq 1} \underline z^n$ is dense in $\gamma$. For instance, we can take any continuous parameterization $\gamma:[0,1] \to D \cup \{a,b\}$ and set $\underline z^n = \{\gamma(j/2^n): j = 1,\dots,2^n-1\}$. By Lemma~\ref{lem:existence}, for each positive integer $n$, we can find chord $\gamma^n \in \mc X(D;a,b;\underline z^n)$ which minimizes the energy $I_{D;a,b}$ among those passing through $\underline z^n$. 
    By Proposition~\ref{prop:minimizer}, each $\gamma^n$ is piecewise geodesic relative to $\underline z^n$ and hence 
    \begin{equation}
        I_{D;b,a}(\gamma^n) = I_{D;a,b}(\gamma^n) \leq I_{D;a,b}(\gamma) < \infty.
    \end{equation}
    By Lemma~\ref{lem:lsc}, there exists a subsequence $\gamma^{n_k}$ which converges to a chord $\eta \in \mc X(D;b,a)$ in the Hausdorff distance, which furthermore satisfies
    \begin{equation}
        I_{D;b,a}(\eta) \leq \liminf_{k\to\infty} I_{D;b,a}(\gamma^{n_k}) = \liminf_{k\to\infty} I_{D;a,b}(\gamma^{n_k}) \leq I_{D;a,b}(\gamma).
    \end{equation}
    We claim that $\eta=\gamma$ modulo orientation. Note that for every $z \in \underline z^{m}$ and $n_k\geq m$, since $\underline z^{m} \subset \underline z^{n_k}$, the distance between $z$ and the chord $\eta$ is bounded above by the Hausdorff distance between $\gamma^{n_k}$ and $\eta$. Taking $n_k\to\infty$, we see that every $z \in \bigcup_{m\geq 1} \underline z^m$ is in $\eta$. Since $\bigcup_{m\geq 1} \underline z^m$ is dense in $\gamma$ and $\eta$ is closed, we have $\gamma \subseteq \eta$ and thus $\gamma = \eta$ by Lemma~\ref{lem:chord-inclusion}. Therefore, $I_{D;b,a}(\gamma) \leq I_{D;a,b}(\gamma)$ and, changing the roles of $a$ and $b$, we conclude $I_{D;a,b}(\gamma) \leq I_{D;b,a}(\gamma)$ as well. 

    This proves the theorem if either $I_{D;a,b}(\gamma)$ or $I_{D;b,a}(\gamma)$ is finite. It is trivially satisfied when the energy is infinite in both orientations, so $I_{D;a,b}(\gamma) = I_{D;b,a}(\gamma)$ for any $\gamma \in \mc X(D;a,b)$.
\end{proof}

\section{Energy minimizers for isotopy classes of chords}\label{sec:isotopy}

Let $D$ be a simply connected domain with distinct prime ends $a$ and $b$. Given $\underline z = \{z_1,\dots,z_n\} \subset D$, recall that $\mc X(D;a,b;\underline z)$ is the set of chords in $D$ from $a$ to $b$ which visits all points in $\underline z$. We say that two chords $\gamma_0,\gamma_1 \in \mc X(D;a,b;\underline z)$ are \textit{isotopic relative to $\underline z$} if there exists an isotopy between them that fixes $\underline z$ pointwise: i.e., a continuous map $F: [0,1]\times \gamma_0 \to D \cup \{a,b\}$ with $F(0,\gamma_0) = \gamma_0$ and $F(1,\gamma_0) = \gamma_1$ such that, for every $s\in [0,1]$, we have $F(s,\gamma_0) \in \mc X(D;a,b;\underline z)$ and $F(s,z) = z$ for each $z\in \underline z$. This is an equivalence relation; let us denote
\begin{equation}
    \mc X(D;a,b;\underline z, \gamma_0):= \{ \gamma \in \mc X(D;a,b;\underline z): \text{$\gamma$ and $\gamma_0$ are isotopic relative to $\underline z$} \}.
\end{equation}
Note that this isotopy class is preserved naturally under conformal transformations: that is, if $f:D\to f(D)$ is a conformal map, then $\gamma \in \mc X(D;a,b;\underline z, \gamma_0)$ if and only if $f(\gamma) \in \mc X(f(D);f(a),f(b);f(\underline z), f(\gamma_0))$. 

We note that an isotopy between two chords $\gamma_0$ and $\gamma_1$ relative to $\underline z$ extends to an \textit{ambient isotopy} of $\overline D$ relative to $\underline z$, where by $\overline D$ we mean the union of $D$ with the set $\partial D$ of its prime ends. The converse relationship is clear: if there exists an ambient isotopy between two chords relative to $\underline z$, then they are isotopic relative to $\underline z$. Let us denote by $\mathrm{Homeo}(\overline D,K)$ the space of self-homeomorphisms of $\overline D$ which restrict to the idenity map on $K\subset \overline D$, equipped with the compact-open topology. 

\begin{lem}\label{lem:ambient-isotopy}
    If two chords $\gamma_0,\gamma_1 \in \mc X(D;a,b;\underline z)$ are isotopic relative to $\underline z$, then there exists a continuous map $\widetilde F: [0,1] \times \overline D \to \overline D$ such that $F(s,\cdot) \in \mathrm{Homeo}(\overline D, \underline z \cup \partial D)$ for each $s\in[0,1]$, $F(0,\cdot)$ is the identity map, and $\gamma_1 = \widetilde F(1,\gamma_0)$. 
\end{lem}
\begin{proof}
    Without loss of generality, assume that $D$ is the unit disk $\ud$ and denote $\underline z = \{z_1,\dots,z_n\}$. Choose $\ep>0$ such that the closed disks $\overline{B_{2\ep}(z_k)}$ are contained in $\ud$ and disjoint from each other. It is straightforward to find a family of maps $(\phi_{\underline w})$, indexed by $\underline w = \{w_1,\dots,w_n\}$ satisfying $\max_k |w_k - z_k| < \ep$, with the following conditions: $\phi_{\underline z}$ is the identity map, $\phi_{\underline w} \in \mathrm{Homeo}(\overline \ud, \partial \ud)$ for each $\underline w$, and $\phi_{\underline w}(w_k) = z_k$ for each $w_k \in \underline w$. Furthermore, we can choose these maps so that $\underline w \mapsto \phi_{\underline w}$ is continuous with respect to the Hausdorff distance on the domain and the uniform norm on the codomain.

    Let $\gamma_0 :[0,1] \to \ud \cup \{a,b\}$ be a continuous parameterization of a chord in $\mc X(D;a,b,\underline z)$ with $t_k:= (\gamma_0)^{-1}(z_k)$ for each $z_k\in \underline z$. We claim that there exists a $\delta>0$ (depending on $\gamma_0$) so that the following is true: if $\gamma_1:[0,1] \to \ud \cup \{a,b\}$ is a parameterized chord with $\sup_{t\in [0,1]}|\gamma_0(t) - \gamma_1(t)| < \delta$ and $\gamma_1(t_k) = z_k$ for every $k$, then there exists an ambient isotopy $\widetilde F$ as in the statement of the lemma.
    By the Schoenflies theorem, we can choose $\delta>0$ such that for any such $\gamma_1$, there is $f\in \mathrm{Homeo}(\overline \ud, \partial \ud)$ with $\sup_{x\in \overline \ud} |f(x) - x| < \ep$ and $\gamma_1 = f\circ \gamma_0$ (see \cite[Cor.\ 2.9 and Thm.\ 2.11]{Pommerenke_boundary}). Since $\mathrm{Homeo}(\overline \ud, \partial \ud)$ is locally contractible \cite{hom-local-contractible}, $f$ extends to an isotopy $F: [0,1] \times \overline \ud \to \overline \ud$ from the identity map to $f$ such that $F(s,\cdot) \in \mathrm{Homeo}(\overline \ud,\partial \ud)$ and $\sup_{x\in \overline \ud}|F(s,x) - x| < \ep$ for every $s \in [0,1]$. Then, $\widetilde F(s,\cdot) := \phi_{F(s,\underline z)} \circ F(s,\cdot)$ is the desired ambient isotopy between $\gamma_0$ and $\gamma_1$ leaving $\underline z$ pointwise fixed.

    Now, given any $\gamma_1 \in \mc X(\ud;a,b;\underline z, \gamma_0)$, choose an isotopy $H: [0,1]\times \gamma_0 \to \ud \cup \{a,b\}$ from $\gamma_0$ to $\gamma_1$ leaving $\underline z$ pointwise fixed and denote $\gamma_s(t):= H(s,t)$. By the previous paragraph, there exists a $\delta_s>0$ for every $s\in [0,1]$ such that $\gamma_{\tilde s}$ is ambient isotopic to $\gamma_s$ through $\mathrm{Homeo}(\overline \ud, \underline z \cup \partial \ud)$ whenever $|\tilde s - s| <\delta_s$. This gives an open cover of $[0,1]$, so we conclude that $\gamma_1$ is ambient isotopic to $\gamma_0$ through $\mathrm{Homeo}(\overline \ud, \underline z \cup \partial \ud)$. 
\end{proof}

\begin{remark}\label{rem:braid}
All chords in $\mc X(D;a,b)$ are isotopic to each other by the Jordan curve theorem. 
When $\underline z$ contains $1\leq n <\infty$ number of points, by Lemma~\ref{lem:ambient-isotopy}, the collection of isotopy classes of chords in $\mc X(D;a,b;\underline z)$ relative to $\underline z$ correspond to the mapping class group of a disk with $n$ punctures. This can be further identified with the braid group on $n$ strands (see, e.g., \cite[Sec.\ 9.1]{mapping-class-groups}). As such, there is a unique isotopy class relative to $\underline z$ when $\underline z$ consists of a single point, but the number of isotopy classes relative to $\underline z$ is countably infinite when $\underline z$ contains more than one point.
\end{remark}

\begin{proof}[Proof of Theorem~\ref{prop:isotopy}]
    If $\underline z= \varnothing$, then $\gamma^{\mathrm{min}}$ is the hyperbolic geodesic in $\mc X(D;a,b)$ and the claim is trivially satisfied.     
    Let us thus consider the case that $\underline z$ contains $n\geq 1$ points and show that there exists a chord in $\mc X(D;a,b;\underline{z},\gamma)$ with finite energy. The proof is by induction on $n$. 
    \begin{itemize}
        \item If $\underline z$ contains a single point $z_1$, then there is a unique isotopy class of chords in $\mc X(D;a,b)$ passing through $z_1$ relative to this point. 
        In this case, $\gamma^{\mathrm{min}}$ with finite energy was uniquely identified in \cite[Prop.\ 3.1]{W1} using the interpretation of chordal Loewner energy as the large deviation rate function of chordal SLE and again in \cite[Thm.\ 3.3(i)]{energy-minimizer} using a deterministic Loewner chain argument.

        \item Let $n\geq 2$ and suppose there exists a finite energy chord passing through any set of $n-1$ points in $D$ with any given isotopy class. 
        Let $z_1$ and $z_2$ be the first and second points of $\underline z$ visited by $\gamma$, respectively. Let $\gamma_0$ be the part of $\gamma$ until it visits $z_1$, let $\gamma_1$ be the part between $z_1$ and $z_2$, and finally let $\gamma_2$ be the rest of $\gamma$ after it visits $z_2$.  

        Using the result for $n=1$ case referenced above, let us replace $\gamma_0 \cup \gamma_1$ with a finite energy chord $\eta$ in $\mc X(D \setminus \gamma_2; a,z_2)$ passing through $z_1$. Let $\eta_0$ be the part of this curve up to its visit to $z_1$ and $\eta_1$ be the remaining part from $z_1$ to $z_2$.
        Note that we can find an isotopy between $\gamma_0 \cup \gamma_1$ and $\eta$ in $D \setminus \gamma_2$ fixing $z_1$ and extend it to one between $\gamma$ and $\eta \cup \gamma_2$ in $D$ fixing $\underline z$.
        That is, $\eta \cup \gamma_2 \in \mc X(D;a,b;\underline z, \gamma)$.
        Moreover, since $I_{D\setminus \gamma_2;a,z_2}(\eta_0)<\infty$, we have $I_{D;a,b}(\eta_0)<\infty$ by \cite[Lem.\ 4.3]{W1}. 
        
        We now recall the induction hypothesis to choose a chord $\tilde \eta \in \mc X(D\setminus \eta_0;z_1,b;\underline z\setminus \{z_1\}, \eta_1 \cup \gamma_2)$ with finite energy. Then, by the additivity \eqref{eq:additivity} of Loewner energy, we have
        \begin{equation}
            I_{D;a,b}(\eta_0 \cup \tilde \eta) = I_{D;a,z_1}(\eta_0) + I_{D\setminus \eta_0;z_1,b}(\tilde \eta) < \infty.
        \end{equation}
        Furthermore, $\eta_0 \cup \tilde \eta$ is isotopic to $\eta \cup \gamma_2$ relative to $\underline z$, so we have found a finite-energy chord in $\mc X(D;a,b;\underline z, \gamma)$ as claimed.
        
    \end{itemize}

Let us assume $D = \ud$, $a=-1$, and $b=1$ and choose a sequence of finite-energy chords $(\gamma^m)_{m\in \mb N}$ in $\mc X(\ud;-1,1;\underline z,\gamma)$ whose energies decrease to the infimum energy in this isotopy class. By Lemma~\ref{lem:lsc}, for each $m$, there exists $\varphi_m \in \mathrm{Homeo}(\overline \ud, \partial \ud)$ with $\varphi_m([-1,1]) = \gamma^m$. Moreover, along a subsequence, $\varphi_{m_k}$ converges uniformly to some $\varphi \in \mathrm{Homeo}(\overline \ud, \partial \ud)$. Since $\overline \ud$ is compact, $\varphi_{n_k}^{-1} \to \varphi^{-1}$ uniformly as well, which implies that $\varphi_{m_k}^{-1}(\underline z)$ converges in the Hausdorff distance. Then, we can pick a sequence of homeomorphisms $\psi_{m_k} \in \mathrm{Homeo}(\overline \ud, \partial \ud)$ converging uniformly to the identity map such that $\psi_{m_k}([-1,1]) = [-1,1]$ and $(\varphi_{m_k} \circ \psi_{m_k})^{-1} (\underline z) = \varphi^{-1}(\underline z)$ for every $m_k$. Letting $\gamma^{\mathrm{min}}:= \varphi([-1,1])$, we see that 
$\varphi_{m_k} \circ \psi_{m_k} \circ \varphi^{-1} \in \mathrm{Homeo}(\overline \ud, \underline z \cup \partial \ud)$ maps $\gamma^{\mathrm{min}}$ to $\gamma^{m_k}$, and these homeomorphisms converge uniformly to the identity map as $m_k\to\infty$. Since the identity component of $\mathrm{Homeo}(\overline \ud, \underline z \cup \partial \ud)$ is homotopically trivial\cite{hamstrom-disk-holes}, we conclude $\gamma^{\mathrm{min}} \in \mc X(\ud;-1,1;\underline z,\gamma)$. Since $\gamma^{m_k} \to \gamma^{\mathrm{min}}$ in the Hausdorff distance, we obtain \eqref{eq:isotopy-min} by the lower semicontinity of Loewner energy (Lemma~\ref{lem:lsc}).

The claim that an energy minimizer of $\mc X(D;a,b;\underline z,\gamma)$ is piecewise geodesic relative to $\underline z$ follows from the fact that the manipulations in the proof of Proposition~\ref{prop:minimizer} do not change the isotopy class.
\end{proof}

\bibliographystyle{alpha}
{\small\bibliography{ref}}

\end{document}